\makeatletter\@ifundefined{pdfpagewidth}{}{\pdfpagewidth=21.0cm\pdfpageheight=29.7cm}\makeatother 
\let\orig@item=\@item \def\@item[#1]{\orig@item[\rm #1]}
\renewenvironment{abstract}{\begin{quote}\footnotesize\textbf{\abstractname.}}{\end{quote}\bigskip}
\renewcommand\l@section{\@dottedtocline{1}{0em}{1.6em}} 
\renewcommand\l@subsection{\@dottedtocline{2}{1.6em}{2em}} 
\renewcommand\@seccntformat[1]{\csname the#1\endcsname.\enspace}
\renewcommand\paragraph{\@startsection{paragraph}{4}{\z@}{1\baselineskip}{-0.5em}{\normalsize\bfseries}}
\let\origcaption=\caption \renewcommand\caption[1]{\parbox{0.66\textwidth}{\origcaption{#1}}}
\renewcommand\@begintheorem[2]{\trivlist\item[\hskip\labelsep{\bfseries#1 #2.}]\it}
\renewcommand\@opargbegintheorem[3]{\trivlist\item[\hskip\labelsep{\bfseries#1 #2}] {\bfseries(#3).}\enspace\it\ignorespaces}
\newtheorem{satz}{Satz}[section]
\makeatletter\@addtoreset{equation}{satz}\makeatother
\newtheorem{theorem}[satz]{Theorem}
\newtheorem{proposition}[satz]{Proposition}
\newtheorem{remark}[satz]{Remark}
\newtheorem{algorithm}[satz]{Algorithm}
\newenvironment{proclaim}[1]{\begin{list}{}{\labelwidth=0cm\leftmargin=0cm}\item[\hspace{\labelsep}\bfseries#1.]\itshape}{\end{list}}
\newenvironment{proof}[1][Proof]{\trivlist\item[\hskip\labelsep{\it #1.}]}{\hspace*{\fill}$\Box$\endtrivlist}
\newcommand\compact{\itemsep=0cm \parskip=0cm}
\newcommand\engqq[1]{``#1''}
\newcommand\tab[2][t]{\begin{tabular}[#1]{@{}l@{}}#2\end{tabular}}
\renewcommand\emptyset{\varnothing}  
\renewcommand\ge{\geqslant}  
\renewcommand\le{\leqslant}  
\renewcommand\epsilon{\varepsilon}
\renewcommand\phi{\varphi}
\renewcommand\O{{\cal O}}
\renewcommand\P{\mathbb P}
\newcommand\liste[3]{\mbox{$#1_{#2},\dots,#1_{#3}$}}
\newcommand\longsum[3]{#1_{#2}+\dots+#1_{#3}} 
\newcommand\set[1]{\left\{#1\right\}}
\newcommand\sset[1]{\left\{\,#1\,\right\}}
\newcommand\eqnref[1]{(\ref{#1})}
\newcommand\eqdef{\stackrel{\rm def}{=}}
\newcommand\vect[1]{\left(\begin{array}{c}#1\end{array}\right)}
\newcommand\union{\cup}
\newcommand\intersect{\cap}
\newcommand\maxmatrcols{10}
\newlength\matrcolsep \matrcolsep=\arraycolsep
\newcommand\matr[1]{{\arraycolsep=\matrcolsep\left(\begin{array}{*{\maxmatrcols}{c}}#1\end{array}\right)}}
\newcommand\inverse{^{-1}}
\newcommand\tabline{\rule[-1.1ex]{0pt}{1ex}\\\hline\rule{0pt}{2.8ex}}
\newcommand\N{\mathbb N}
\newcommand\R{\mathbb R}
\newcommand\Z{\mathbb Z}
\newcommand\newop[2]{\newcommand#1{\mathop{\rm #2}\nolimits}}
\newop\NS{NS} 
\newop\vol{vol}
\newop\BigCone{Big} 
\newop\B{Bs}
\newop\SB{\textbf{B}}
\newcommand\dd[2]{\delta_{#1#2}}
\newcommand\CC[1]{C^{(#1)}}
\newenvironment{lines}
   {\newcommand\+{\hspace*{2em}\ignorespaces} 
   \begin{list}{}{\leftmargin=\leftmargini\parsep=0cm}\item\begin{obeylines}}
   {\end{obeylines}\end{list}}
\begin{document}

\title{Counting Zariski chambers on Del Pezzo surfaces}
\author{Thomas Bauer, Michael Funke, Sebastian Neumann}
\date{December 4, 2009}
\maketitle
\thispagestyle{empty}

\begin{abstract}
   Zariski chambers provide a natural decomposition of the big cone
   of an algebraic surface into
   rational locally polyhedral subcones that are
   interesting from the point of view of linear series.
   In the present paper we present an
   algorithm that allows to effectively determine Zariski
   chambers when the negative curves on the surface are known.
   We show how the algorithm can be used to compute the number of
   chambers on Del Pezzo surfaces.
\end{abstract}

\section*{Introduction}

   In \cite{BKS} it was shown that the big cone of an algebraic
   surface admits a natural
   locally finite decomposition into
   rational locally polyhedral
   subcones, the \emph{Zariski chambers} on $X$.
   These chambers are of basic interest from the point of view of
   linear series on $X$: In the interior of each Zariski chamber the
   stable base loci are constant, and the volume function is
   given by a
   quadratic polynomial in each chamber.
   (See Sect.~\ref{sect-chambers} for details on the chamber
   decomposition.)
   Understanding the behaviour of stable base loci and the
   volume function is also of great interest
   in the higher-dimensional case, where the picture is not
   as clear as for surfaces
   (see \cite{AILB} and \cite{AIBL}).

   It is an intruiging question to wonder into how many Zariski
   chambers the big cone decomposes on a given surface.
   In other words, we ask
   on a smooth projective surface $X$ for
   the quantity
   $$
      z(X)=\#\set{\mbox{Zariski chambers on $X$}}\in\N\union\set\infty \,.
   $$
   The number $z(X)$ is an interesting geometric invariant of the
   surface $X$, as it is the answer to the following questions
   (see Sect.~1):
   \begin{itemize}\compact
   \item
      How many different stable base loci can occur in big linear
      series on $X$ ?
  \item
      How many essentially different Zariski decompositions can
      big divisors on $X$ have? (By
      \engqq{essentially different} we mean here that their negative
      parts have different support.)
   \item
      How many \engqq{pieces} does the volume function
      $\vol\colon\BigCone(X)\to\R$ have
      (which is a piecewise polynomial function)?
   \end{itemize}
   So, somewhat roughly speaking, one may think of
   the number $z(X)$ as
   measuring how complicated the surface is from the point of
   view of linear series.

   In the present paper we provide an algorithm that allows to
   compute the invariant
   $z(X)$ whenever the irreducible curves of negative
   self-intersection on $X$ are known.
   In particular, we will show how to apply the algorithm to
   Del Pezzo surfaces.
   Recall that a
   Del Pezzo surface is either $\P^1\times\P^1$, $\P^2$, or
   a blow-up of $\P^2$ at $r\le 8$
   general points.
   As one clearly has $z(\P^1\times\P^1)=1$ and $z(\P^2)=1$,
   it is enough to study the blow-ups.
   We show:

\begin{proclaim}{Theorem}
   Let $X_r$ be the blow-up of $\P^2$ in $r$ general points with
   $1\le r\le 8$.
   \begin{itemize}\compact
   \item[(i)]
   The number $z(X_r)$ of Zariski chambers on $X_r$ is given by
   the following table:
   $$
      \begin{array}{c|*8c}
         r        & 1 & 2 & 3 & 4 & 5 & 6 & 7 & 8 \tabline
         z(X_r)   & 2 & 5 & 18 & 76 & 393 & 2\,764 & 33\,645 & 1\,501\,681
      \end{array}
   $$

   \item[(ii)]
   The maximal number of curves that occur in the support of a
   Zariski chamber on $X_r$ is $r$.
   \end{itemize}
\end{proclaim}

   As one might expect intuitively,
   the number of chambers increases as
   the Picard number $\rho(X_r)=r+1$ increases.
   Note however that this is not automatic:
   On abelian surfaces, for instance,
   $\rho(X)$
   varies between 1 and 4, but one has always $z(X)=1$, since
   the intersection of the nef cone and the big cone is the
   only Zariski chamber.
   The same thing happens on suitable K3 surfaces: There are
   K3 surfaces $X$ of any Picard number up to 11 with $z(X)=1$
   (see \cite[Theorem~2]{Kov94}).
   On the other hand, if one considers the blow-up $X_r$
   of $\P^2$ in
   $r\ge 9$ general points, then the surface
   $X_r$ (which is no longer a Del Pezzo surface)
   contains infinitely
   many $(-1)$-curves and therefore one has $z(X_r)=\infty$.

   Our algorithm -- to be discussed in Sect.~\ref{sect-algo} --
   is in no way
   specific to Del Pezzo surfaces. It applies to any surface
   where the irreducible curves with negative self-intersection are
   explicitly known. We plan to study further applications of
   this method in a subsequent paper.

\paragraph*{\it Acknowledgement.} We
      benefited from discussions with V.~Welker.

\section{Negative curves and chambers}\label{sect-chambers}
\enlargethispage{\baselineskip}

   Consider a smooth projective surface $X$. A divisor $D$ on $X$
   is \emph{big}, if its \emph{volume}
   $$
      {\rm vol}_X(D)\stackrel{\rm def}{=}\limsup_k{\frac{h^0(X,kD)}{k^2/2}}
   $$
   is positive.
   The
   \emph{big cone} $\BigCone(X)$ is the cone in the N\'eron-Severi
   vector space $\NS_\R(X)$ that is generated by the big divisors.
   To any big and nef
   $\R$-divisor $P$, one associates the \textit{Zariski
   chamber} $\Sigma_P$, which by definition
   consists of all divisors in
   $\BigCone(X)$ such
   that the irreducible curves
   in the negative part of
   the Zariski decomposition of $D$ are precisely the curves $C$
   with
   $P\cdot C=0$.
   It is shown in \cite[Lemma~1.6]{BKS} that
   for any two big and nef divisors
   $P$ and $P'$, the Zariski
   chambers $\Sigma_P$ and $\Sigma_{P'}$ are
   either equal or disjoint. So
   the Zariski chambers yield a decomposition of the
   big cone.
   If $A$ is an ample divisor, then the chamber $\Sigma_A$
   is the intersection of the big cone and the nef cone,
   and its interior is the ample cone;
   in the sequel we call it the \textit{nef chamber} for short.
   The main
   result of \cite{BKS} states that
   the decomposition into Zariski chambers is
   a locally finite decomposition of $\BigCone(X)$
   into rational locally
   polyhedral subcones, such that
   \begin{itemize}\compact
   \item
      on each chamber the volume
      function is given by a single polynomial of degree two, and
   \item
      in the interior of each chamber the stable base
      loci are constant.
      (See Proposition~\ref{prop-stable-base-loci} below for
      the general statement.)
   \end{itemize}

   The following characterization will be essential for our
   purposes.

\begin{proposition}\label{prop-neg-def}
   The set of Zariski chambers on a smooth projective surface
   $X$ that are different from the
   nef chamber
   is in bijective correspondence with the set of
   reduced divisors on $X$ whose intersection matrix is negative
   definite.
\end{proposition}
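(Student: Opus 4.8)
The plan is to construct the bijection by hand, from Zariski chambers to divisors and back. To a Zariski chamber $\Sigma_P$ different from the nef chamber I attach the reduced divisor $N_P$ whose prime components are precisely the irreducible curves $C$ with $P\cdot C=0$. The first point to settle is that this makes sense and lands in the right target: since $P$ is big we have $P^2>0$, so by the Hodge index theorem the orthogonal complement $P^\perp\subset\NS_\R(X)$ is negative definite; the curves meeting $P$ in zero all lie in $P^\perp$, and --- since distinct irreducible curves meet non-negatively --- splitting any putative linear relation among them into its positive and its negative part and using negative definiteness forces the relation to be trivial. Hence there are only finitely many such curves and their intersection matrix is negative definite, so $N_P$ is a reduced divisor of the required type. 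Moreover the nef chamber, and no other, is sent to the zero divisor: an ample $P$ meets every curve positively, whereas a big and nef but non-ample $P$ meets some irreducible curve in zero by the Nakai--Moishezon criterion.

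Well-definedness and injectivity of $\Sigma_P\mapsto N_P$ should then be almost formal. Every chamber is non-empty; indeed $P+\epsilon N_P$ lies in $\Sigma_P$ for all small $\epsilon>0$, because the three properties ``$P$ nef'', ``$P$ orthogonal to the components of $\epsilon N_P$'', ``negative definite intersection matrix'' identify $P+\epsilon N_P$ with its own Zariski decomposition, whose negative part is $\epsilon N_P$. So from any divisor lying in $\Sigma_P$ one recovers $N_P$ as the support of the negative part of its Zariski decomposition, and $N_P$ thus depends on the chamber alone. Conversely $\Sigma_P$ itself depends on $P$ only through the set of curves orthogonal to $P$, immediately from the definition of $\Sigma_P$; hence the assignment is injective. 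The real content is therefore surjectivity: given a reduced divisor $N=C_1+\dots+C_k$ with $k\ge 1$ whose intersection matrix is negative definite, I must produce a big and nef $\R$-divisor $P$ meeting exactly $C_1,\dots,C_k$ in zero and meeting every other irreducible curve positively.

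For this I would tilt an ample class. Fix an ample divisor $A$, let $M=(C_i\cdot C_j)$ be the intersection matrix --- negative definite, hence invertible, and forcing $C_1,\dots,C_k$ to be linearly independent --- let $a=(a_1,\dots,a_k)$ be the unique solution of $Ma=(A\cdot C_1,\dots,A\cdot C_k)$, and put
\[
   P\ :=\ A-\sum_{i=1}^{k}a_i C_i,
\]
so that $P\cdot C_i=0$ for every $i$ by construction. I expect the crux of the whole argument to be the sign statement that $a_i\le 0$ for all $i$, which is exactly where negative definiteness is used in an essential way: the matrix $-M$ is symmetric positive definite with non-positive off-diagonal entries (distinct irreducible curves meet non-negatively), hence is a Stieltjes matrix and has an entrywise non-negative inverse, so $a=-(-M)^{-1}(A\cdot C_j)_j$ has non-positive entries since $(A\cdot C_j)_j$ has positive ones. (Equivalently, avoiding M-matrices: if $F:=\sum_{a_i>0}a_i C_i$ were non-zero one computes $F\cdot C_i\ge A\cdot C_i>0$ for each $i$ with $a_i>0$, hence $F^2>0$, contradicting that $F$ lies in a negative definite subspace.)

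Once $a_i\le 0$ is known, the rest is routine. Then $P=A+\sum_i(-a_i)C_i$ is an ample divisor plus an effective divisor, hence big; and for any irreducible curve $C\notin\{C_1,\dots,C_k\}$ one has $P\cdot C\ge A\cdot C>0$, while $P\cdot C_i=0$, so $P$ is non-negative on every irreducible curve and therefore nef. Thus $P$ is big and nef with orthogonal curves precisely $C_1,\dots,C_k$, so $\Sigma_P$ is a chamber different from the nef chamber with $N_P=N$. This establishes surjectivity, and with it the asserted bijection.
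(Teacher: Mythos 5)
Your proof is correct and rests on the same pillars as the paper's: the Hodge index theorem for the forward direction, and for surjectivity the linear system $\sum_i a_i(C_i\cdot C_j)=A\cdot C_j$ together with the sign fact that a negative definite matrix with non-negative off-diagonal entries has a non-positive inverse (the paper cites \cite[Lemma~4.1]{BKS} for this; your Stieltjes-matrix phrasing and your direct alternative argument via the hypothetical positive part $F$ are equivalent). The only difference is presentational: you build the big and nef class $P=A-\sum a_iC_i$ directly and verify that it annihilates exactly $C_1,\dots,C_k$, whereas the paper exhibits a divisor $D=H+k(C_1+\cdots+C_r)$ in the chamber for $k\gg0$ and identifies $P$ as the positive part of its Zariski decomposition -- two routes through the same computation.
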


\begin{proof}
   Given a chamber $\Sigma_P$, we consider the irreducible
   curves $\liste C1r$ with $P\cdot C_i=0$. Then the divisor
   $C_1+\dots+C_r$ has negative definite intersection matrix
   thanks to the index theorem.

   Conversely, given a reduced divisor
   $C_1+\dots+C_r$ with negative definite intersection matrix,
   we consider
   the divisor
   $$
      D \eqdef H+k(C_1+\dots+C_r) \ ,
   $$
   where $H$ is a fixed ample divisor and $k$ a positive integer.
   This divisor is big, and we claim that
   for $k\gg0$ the negative part of
   its Zariski decomposition will have
   $C_1\cup\dots\cup C_r$ as its support.
   The latter fact can for instance
   be seen from the computation of the
   Zariski decomposition according to \cite{Bau}.
   Alternatively, consider the linear system of equations
   \begin{equation}\label{linsys}
      (H+\sum_{i=1}^r a_iC_i)C_j=0\,, \qquad j=1,\dots,r,
   \end{equation}
   with unknowns $a_1,\dots,a_r$. If $S$ denotes the
   intersection matrix $(C_i\cdot C_j)_{i,j}$, then the unique
   solution of \eqnref{linsys} is given by
   $$
      \vect{a_1\\ \vdots \\ a_r}=-S\inverse\vect{H\cdot C_1\\
      \vdots\\ H\cdot C_r}
   $$
   As $S$ is by assumption negative definite, it follows that all
   entries of $S\inverse$ are $\le 0$ (see~\cite[Lemma~4.1]{BKS}),
   and consequently we have
   $a_i\ge 0$ for all $i$.
   The divisor $H+\sum_{i=1}^r a_iC_i$ is then for $k\gg 0$
   clearly an effective and
   nef $\mathbb Q$-subdivisor of $H+k\sum_{i=1}^r C_i$
   having zero intersection with all $C_i$. By the uniqueness of
   Zariski decompositions, it follows that it is the positive
   part in the Zariski decomposition of $H+k\sum_{i=1}^r
   C_i$, and therefore the negative part has support
   $C_1\cup\dots\cup C_r$, as claimed.
\end{proof}

\begin{remark}\rm
   Note that the divisor $D=H+k(C_1+\dots+C_r)$ that is
   considered in the proof of Proposition~\ref{prop-neg-def} lies
   in the \emph{interior} of the chamber that corresponds to
   $C_1+\dots+C_r$. In fact, write $D=P+N$ for its Zariski
   decomposition, and suppose that $D$ lies on the boundary of
   a chamber. Then by \cite[Proposition~1.7]{BKS} there must
   exist an irreducible curve $C\subset X$
   with $P\cdot C=0$
   that does not occur as
   a component of $N$.
   But as $P$ is of the form $H+a_1C_1+\dots+a_rC_r$ with $H$
   ample, it is clear that $P\cdot C=0$ can happen only if $C$ is
   among
   the curves $C_i$. However, all of them are components of $N$.
\end{remark}

   The next statement justifies the claim made in the
   introduction to the effect that counting Zariski chambers is
   equivalent to counting stable base loci of big linear series.
   By way of notation, we write $\B(|D|)$ for the base locus of the
   linear series~$|D|$, and
   $$
      \SB(D)\eqdef\bigcap_{m=1}^\infty\B(|mD|)
   $$
   for the stable base locus of $D$.

\begin{proposition}\label{prop-stable-base-loci}
   The set of Zariski chambers on a smooth projective surface
   $X$
   is in bijective correspondence with the set of
   stable base loci that occur in big linear series on $X$.
\end{proposition}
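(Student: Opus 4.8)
The plan is to establish the bijection by showing that the stable base locus of a big divisor $D$ depends only on the Zariski chamber containing $D$, and then that distinct chambers give distinct stable base loci. Combined with Proposition~\ref{prop-neg-def} — which already describes the non-nef chambers via negative definite reduced divisors — this will pin down the correspondence explicitly: the nef chamber corresponds to the empty stable base locus, and the chamber $\Sigma_P$ associated to the reduced curve $C_1+\dots+C_r$ corresponds to the stable base locus $C_1\cup\dots\cup C_r$.

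First I would recall the structure of the Zariski decomposition: if $D=P+N$ with $P$ nef, $N=\sum a_iC_i$ effective, and $P\cdot C_i=0$, then for all $m$ with $mP$ integral one has $H^0(X,mD)=H^0(X,mP)$, because any section of $mD$ must vanish along the negative part (a standard consequence of the negative definiteness of the intersection matrix of $N$ together with $P$ being the maximal nef subdivisor). Hence $\SB(D)=\SB(P)\cup\bigcup_i C_i$, and since $P$ is big and nef, $\SB(P)$ is contained in the union of curves orthogonal to $P$ — which by definition of $\Sigma_P$ are exactly the $C_i$ — so in fact $\SB(D)\subseteq C_1\cup\dots\cup C_r$. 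For the reverse inclusion I would argue that each $C_i$ actually appears: since $N$ is the negative part, $a_i>0$, and because the $C_i$ have negative definite intersection matrix no positive multiple of any $C_i$ moves, forcing $C_i\subseteq\SB(D)$. (For the nef chamber $N=0$ and $\SB(D)=\emptyset$.) This shows $\SB(D)$ is constant on each chamber and is determined by the negative curves defining the chamber.

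Next I would check that the assignment is well-defined and injective on the level of chambers. Well-definedness is the content of the previous paragraph: two divisors in the same chamber $\Sigma_P$ have the same negative-part support, hence the same stable base locus. Injectivity follows because from $\SB(D)=C_1\cup\dots\cup C_r$ one recovers the set of irreducible components $\{C_1,\dots,C_r\}$, which by the proof of Proposition~\ref{prop-neg-def} determines the chamber uniquely (it is $\Sigma_P$ for $P=H+\sum a_iC_i$ as constructed there). Surjectivity is immediate: every big linear series $|mD|$ has $D$ lying in some chamber, so every stable base locus arising this way is on our list.

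**The main obstacle** I anticipate is the clean identification $H^0(X,mD)=H^0(X,mP)$ and the resulting equality $\SB(D)=\SB(P)\cup\mathrm{Supp}(N)$; this is the technical heart and must be invoked carefully — it rests on the fact that for the Zariski decomposition $D=P+N$, every effective divisor linearly equivalent to $mD$ contains $mN$ in its negative part, which is where negative definiteness of the intersection matrix of $\mathrm{Supp}(N)$ enters. A secondary subtlety is confirming that $\SB(P)$ contributes nothing new beyond the curves already in $\mathrm{Supp}(N)$: one needs that a big and nef divisor $P$ has $\SB(P)$ supported on curves $C$ with $P\cdot C=0$, and that on a surface this stable base locus is in fact exactly such a union of curves (no isolated points survive, or if they do, they already lie on the $C_i$) — this is where one uses that we are on a surface and can appeal to the surface-specific vanishing/Zariski-decomposition machinery rather than the murkier higher-dimensional picture alluded to in the introduction. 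Once these two points are in place, the bijection is a formal consequence of Proposition~\ref{prop-neg-def}.
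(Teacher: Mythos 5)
Your argument has a genuine gap, and it is precisely at the point the paper identifies as the crux of the proof. You claim that \(\SB(D)\) is constant on each chamber: \engqq{two divisors in the same chamber \(\Sigma_P\) have the same negative-part support, hence the same stable base locus.} This inference is false, and the paper itself points to a counterexample: in Remark~1.4 (referring to \cite[Example~10.3.3]{PAG}) there are two numerically equivalent big and nef divisors \(D_1\), \(D_2\) — necessarily lying in the \emph{same} chamber, the nef chamber — with \(\SB(D_1)=\emptyset\) and \(\SB(D_2)\) a curve. So the stable base locus is not a function of the chamber, and the proposition is emphatically not proved by showing it is.

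The technical source of the error is the step where you identify the curves orthogonal to the positive part \(P_D\) of \(D\)'s Zariski decomposition with the curves \(C_1,\dots,C_r\) in the support of the negative part. Writing \(\SB(D)=\SB(P_D)\cup\operatorname{Supp}(N_D)\) and bounding \(\SB(P_D)\) by the null locus of \(P_D\) is fine; but for \(D\) on a chamber \emph{boundary}, the null locus of \(P_D\) is strictly larger than \(\operatorname{Supp}(N_D)\) — that is exactly what being on the boundary means (cf.\ \cite[Proposition~1.7]{BKS} as invoked in Remark~1.2 of this paper). So \(\SB(D)\) can be a set of negative curves that is strictly larger than the support of \(N_D\). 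Your argument therefore only establishes \(\SB(D)=\operatorname{Supp}(N_D)\) for \(D\) in the \emph{interior} of a chamber, which is the easy half and is quoted from \cite{BKS}. What remains — and what the paper's proof actually does — is to show that the extra stable base loci coming from boundary divisors are nevertheless already on the list. The paper achieves this by perturbation: \(\SB(D)\subset\SB(D-A)\) for ample \(A\), one may choose \(A\) small so that \(D-A\) is big and lies in the interior of some chamber, whence \(\SB(D-A)\) has negative definite intersection matrix; \(\SB(D)\), being a subdivisor, inherits negative definiteness and therefore corresponds to a chamber by Proposition~\ref{prop-neg-def}. This perturbation step is entirely absent from your proposal, and without it the bijectivity claim is unproved. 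The \engqq{secondary subtlety} you flag at the end is not secondary — it is the whole content of the statement.
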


\begin{proof}
   As we already said above, it follows from \cite{BKS} that for a
   divisor $D$ that lies in the interior of a Zariski chamber, the stable base
   locus $\SB(D)$
   coincides with
   the support of the negative part of the Zariski
   decomposition of $D$.
   The point to show is therefore that the big
   divisors whose numerical classes lie on boundaries
   of Zariski chambers cannot
   lead to stable base loci that have not been accounted for by
   the
   divisors in the interior of chambers. To see that latter,
   suppose that
   $D$ is a big divisor on $X$.
   If $A$ is
   any ample $\mathbb Q$-divisor $A$, then
   we have
   \begin{equation}\label{sb-inclusion}
      \SB(D)\subset\SB(D-A) \,.
   \end{equation}
   For a suitable choice of $A$, the
   numerical class of the
   divisor $D-A$ does not lie on
   the boundary of any chamber. Moreover, as $D$ is big,
   $D-A$ is
   still big when $A$ is sufficiently small. As $D-A$ then lies
   in the interior of a Zariski chamber, $\SB(D-A)$ is the
   support of the negative part of a Zariski decomposition,
   and hence it is the support of a divisor
   $C_1+\dots+C_r$ with negative definite intersection matrix.
   But then $\SB(D)$ is by \eqnref{sb-inclusion}
   a subdivisor of this divisor, and hence
   has negative definite intersection matrix as well. By
   Proposition~\ref{prop-neg-def} this divisor corresponds to a
   Zariski chamber, and hence has been accounted for already.
\end{proof}

\begin{remark}\rm
   Note that in general the stable base locus $\SB(D)$ does not
   depend only on the numerical equivalence class of $D$ (see
   \cite[Example~10.3.3]{PAG}).
   In order to get a function on the big cone, one considers
   \emph{augmented} base loci
   instead (see \cite[Sect~10.3]{PAG}).
   In light of this fact it is even
   more surprising that by Proposition~\ref{prop-stable-base-loci}
   all stable base loci on surfaces are accounted for by
   the Zariski chambers. For instance, in the cited
   Example~\cite[10.3.3]{PAG} one has two numerically
   equivalent big and nef divisors
   $D_1$ and $D_2$ such that $\SB(D_1)=\emptyset$ and $\SB(D_2)$
   is a curve.
   According to Proposition~\ref{prop-stable-base-loci}
   these stable base loci correspond to two
   distinct Zariski chambers.
\end{remark}

   Our aim now is to study the number $z(X)$ of Zariski
   chambers on $X$.
   By way of terminology, the term
   \emph{negative curve} will always mean an
   irreducible curve with negative self-intersection.
   Two things about $z(X)$ are clear from the outset:
   \begin{itemize}\compact
   \item[(1)]
      If $X$ carries only a finite number $N$ of negative curves,
      then one has the trivial upper bound
      $$
         z(X)\le 2^N \,.
      $$
      Intuitively, it seems unlikely that $z(X)$
      is equal (or close) to this upper bound,
      as this would mean that
      every (or almost every)
      set of negative curves occurs in a
      stable base locus.
   \item[(2)]
      We have $z(X)=\infty$ if and only if there are infinitely many
      negative curves on~$X$.
      The blow-up of $\P^2$ in $\ge 9$ general points gives such
      an example.
   \end{itemize}

   When the negative curves on $X$ are known explicitly,
   then there is a way
   to effectively determine the number $z(X)$.
   To formulate the enumerative
   statement, we will use for a given $(n\times n)$-matrix
   the notion
   \textit{principal submatrix} to mean as usual
   a submatrix that
   arises by deleting $k$ corresponding rows and columns of the
   matrix, where $0\le k<n$.
   The following is then an immediate
   consequence of Proposition~\ref{prop-neg-def}:

\begin{proposition}\label{prop-number-of-chambers}
   Let $X$ be a smooth projective surface
   that contains only
   finitely many negative curves.
   \begin{itemize}\compact
   \item[(i)]
   We have
   $$
      z(X)=
      1+\#\sset{\tab[c]{
      negative definite principal submatrices\\ of the intersection
      matrix of the negative curves on $X$}}\,.
   $$
   \item[(ii)]
   More generally,
   let $\liste C1r$ be distinct negative curves on $X$,
   and let $S$
   be their intersection matrix. Then the number of Zariski
   chambers that
   are supported by a non-empty
   subset of $\set{\liste C1r}$ equals the
   number of negative definite principal submatrices of the matrix
   $S$.
   \end{itemize}
\end{proposition}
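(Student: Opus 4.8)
The plan is to read off both parts directly from Proposition~\ref{prop-neg-def}, which already sets up a bijection between the Zariski chambers other than the nef chamber and the reduced divisors on $X$ with negative definite intersection matrix. The first step is to note that such a reduced divisor is automatically supported on negative curves: if a reduced divisor $C_1+\dots+C_k$ has negative definite intersection matrix, then in particular each diagonal entry $C_i^2$ is negative, so every $C_i$ is a negative curve. Hence, when $X$ has only finitely many negative curves, the reduced divisors with negative definite intersection matrix are exactly the non-empty sets of negative curves whose intersection matrix is negative definite, a reduced divisor being determined by its support.

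Next I would match these sets with principal submatrices. Writing $S$ for the intersection matrix of the (finitely many) negative curves on $X$, a principal submatrix of $S$ in the sense fixed above is obtained by selecting a non-empty subset of the index set; it coincides with the intersection matrix of the corresponding subset of negative curves, and is negative definite precisely when that subset is. Together with the first step this gives a bijection between the negative definite principal submatrices of $S$ and the chambers different from the nef chamber. Adding back the nef chamber yields the formula in~(i), the summand $1$ accounting for it.

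For part~(ii) the same reasoning applies with the index set restricted to $\{1,\dots,r\}$ (and here the curves are negative curves by hypothesis): by Proposition~\ref{prop-neg-def} the Zariski chambers whose support is a non-empty subset of $\{C_1,\dots,C_r\}$ correspond to the non-empty subsets of $\{C_1,\dots,C_r\}$ with negative definite intersection matrix, which in turn are in bijection with the negative definite principal submatrices of $S=(C_i\cdot C_j)_{i,j}$.

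The only point requiring a moment's care — and the nearest thing to an obstacle here — is the observation in the first step that restricting attention to subsets of the negative curves, rather than to arbitrary reduced divisors, loses nothing; this is exactly what the negativity of the diagonal entries of a negative definite matrix guarantees. The remainder is bookkeeping about principal submatrices, so the proof is short.
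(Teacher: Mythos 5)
Your argument is correct and is essentially the paper's own: the paper declares the proposition an immediate consequence of Proposition~\ref{prop-neg-def}, and you have simply spelled out that consequence, with the key observation (that negative definiteness forces negative diagonal entries, so only negative curves can appear) matching the intended reduction to principal submatrices of the intersection matrix. Your handling of the subset-versus-submatrix bookkeeping also agrees with the convention the paper makes explicit in the remark following the proposition.
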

   Strictly speaking, it is of course
   not actually the submatrices themselves
   that are to be counted, but the subsets
   of the index set $\set{1,\dots,r}$
   that give rise to the submatrices.
   Nonetheless, we will generally use
   this shorter formulation in the sequel.
   Also,
   note that the \engqq{1+} in (i) accounts for the
   nef chamber.

\begin{remark}\rm
   Looking at Proposition~\ref{prop-number-of-chambers}, one
   would wish for a general matrix-theoretic result that gives
   information about the number of negative definite principal
   submatrices in terms of other (easier accessible) quantities
   associated with the matrix. It seems however that no results
   in this direction are available so far. Not even is it clear
   which quantities might be of relevance: The probably most naive
   guess might be to consider the signature $(p,n)$ of the
   matrix,
   where $p$ is the number of positive and $n$ the number of
   negative eigenvalues.
   However,
   as the following two examples show,
   one cannot expect useful bounds
   in terms
   of the signature.

   (i) Consider the matrix $A$ that is diagonally composed of a
   $k\times k$ unit matrix and the negative of an
   $\ell\times\ell$ unit matrix. Its signature is $(p,n)=(k,\ell)$, and
   it has exactly $2^k-1$ positive definite principal
   submatrices.

   (ii) On the other hand, take $A$ to be
   diagonally composed of a $k\times k$ unit matrix and $\ell$
   copies of the matrix
   $$
      \matr{0 & -1 \\ -1 & 0} \ .
   $$
   It has the same number
   $2^k-1$ of positive definite principal submatrices, but its
   signature is $(p,n)=(k+\ell, \ell)$.

   So while in (i)
   the number of positive definite
   principal submatrices depends only on $p$,
   it depends in (ii)
   on the difference $p-n$.
\end{remark}

\section{Computing chambers}\label{sect-algo}

   Proposition~\ref{prop-number-of-chambers} suggests a way to
   effectively determine Zariski chambers when the
   numerical classes of the
   negative curves are explicitly known: Each negative definite
   principal submatrix of the intersection matrix of the negative
   curves corresponds to a chamber, supported by the
   curves that are represented by the chosen rows and columns.
   Determining the negative
   definite submatrices is however in practice not
   at all immediate:
   If there are many negative curves, then
   such work cannot be done by hand.
   And even when carried out by computer, it is
   not a viable course of action to apply brute
   force and check
   \emph{all} submatrices for
   negative definiteness:
   For instance, on
   the Del Pezzo surface $X_8$ there
   are $2^{240}$ potential submatrices.
   Our algorithm
   exploits the following two
   observations, which drastically reduce the complexity of the
   computation:
   \begin{itemize}
   \item[(1)]
      Let $A$ be the intersection matrix of $n$ negative curves.
      If the principal submatrix $A_S$ corresponding to a subset
      $S\subset\set{1,\dots,n}$ is not negative definite, then
      none of the subsets $S'$ with $S'\supset S$ need to be
      examined, since they cannot be negative definite.
      One can therefore use a backtracking strategy.
   \item[(2)]
      Let $S$ be a subset and let $T$ be the set obtained from
      $S$ by removing its largest element. If the subsets are
      treated in such an order that $S$ is only examined after
      $A_T$ has turned out to be negative definite, then the
      negative definiteness of $A_S$ can be read off the sign of
      its determinant.
   \end{itemize}

   The algorithm below generates all \emph{positive
   definite} principal submatrices of a given symmetric matrix.
   It will subsequently be applied to the negative of the intersection
   matrix.

\begin{algorithm}\label{algo}\rm
   The algorithm takes as input an integer $n\ge 1$ and a
   symmetric
   $(n\times n)$-matrix $A$ over $\R$. It outputs all subsets
   $S\subset\set{1,\dots,n}$ having the property that
   the corresponding principal submatrix
   $A_S$ is
   positive definite.

\begin{lines}
   input $n$, $A$
   $k \gets 1$
   $S \gets \set{1}$
   while $S\ne\emptyset$ do
   \+ assert($k=\max S$ and $A_{S\setminus\set k}$ is positive definite)
   \+ if $\det A_S>0$ then
   \+\+ output $S$
   \+ else
   \+\+ $S \gets S \setminus\set{k}$
   \+ end if
   \+ assert($k\ge\max S$ and $A_S$ is positive definite)
   \+ if $k < n$ then
   \+\+ $k \gets k + 1$
   \+\+ $S \gets S \union\set{k}$
   \+ else
   \+\+ $S \gets S\setminus\set k$
   \+\+ if $S\ne\emptyset$ then
   \+\+\+ $k \gets \max S$
   \+\+\+ $S \gets S \setminus\set k$
   \+\+\+ $k \gets k + 1$
   \+\+\+ $S \gets S \union\set k$
   \+\+ end if
   \+ end if
   end while
\end{lines}
\end{algorithm}

\begin{remark}\rm
   The gain in efficiency compared to checking \emph{all}
   principal submatrices is considerable -- and in fact crucial
   for the algorithm to be practical at all.
   For instance, on the Del Pezzo surface $X_6$ the algorithm
   checks only $15600$ submatrices
   instead of all $2^{27}=134217728$ submatrices,
   which means
   reducing cases to about $0.01$ percent.
\end{remark}

\begin{proof}[Proof of correctness and termination]
   Note first that the two assertions made within the loop are
   true whenever the algorithm reaches them
   (the empty matrix being considered
   positive definite).
   Therefore the condition that $A_S$ be positive definite
   is equivalent to $\det A_S>0$.
   We now have to show that the algorithm terminates and that it
   outputs precisely the claimed subsets.
   Readers familiar with backtracking algorithms might rather
   quickly understand the strategy of Algorithm~\ref{algo} and
   can argue from there.
   For
   readers not versed in these matters we will provide an
   explicit alternative view as follows.

   For index sets $S, S'\subset\set{1,\dots,n}$ we write $S<S'$
   if for some integer $\ell\ge 0$ we have
   $$
      S\intersect\set{1,\dots,\ell}=
      S'\intersect\set{1,\dots,\ell}
   $$
   and
   $$
      \min(S\setminus\set{1,\dots,\ell}) <
      \min(S'\setminus\set{1,\dots,\ell}) \ ,
   $$
   where we set $\min(\emptyset)=-\infty$.
   It is immediate that \engqq{$<$} is a strict total
   order on the set of subsets of $\set{1,\dots,n}$.
   Correctness and termination follow then from the two following
   claims.
   \begin{itemize}\compact
   \item[(i)]
      Loop invariant: At the beginning and at the
      end of each loop cycle all
      index sets $T<S$ have been output for which $A_T$
      is positive definite.
   \item[(ii)]
      At the end of each loop cycle either
      the value of $S$ is strictly bigger
      than at the beginning, or $S=\emptyset$
      (in which case it is the last cycle).
   \end{itemize}

   To verify this, let $S_1$ and $S_2$ be the values of the variable
   $S$ at the beginning and at the end of a loop cycle respectively, and write
   $S_1=\set{\liste i1m}$ with $i_1<\dots<i_m$.
   Then we have
   \begin{equation}\label{S1S2}
      \begin{array}{@{}ll}
         S_2=\set{\liste i1m,i_m+1} & \mbox{if $i_m<n$ and $A_{S_1}$ is positive definite,} \\
         S_2=\set{\liste i1{m-1},i_m+1} & \mbox{if $i_m<n$ and $A_{S_1}$ is not positive definite,} \\
         S_2=\set{\liste i1{m-2},i_{m-1}+1} \mbox{ or } S_2=\emptyset & \mbox{if $i_m=n$.}
      \end{array}
   \end{equation}
   So we have $S_2>S_1$ or $S_2=\emptyset$ in each case, which
   proves
   Claim (ii). As for Claim~(i): The algorithm
   clearly outputs $S_1$, if $A_{S_1}$ is positive definite.
   Further, one sees from \eqnref{S1S2} that there is
   no set $T$ with $S_1<T<S_2$ unless $i_m<n$ and $A_{S_1}$ is
   not positive definite. In the latter case, all sets $T$ with
   $S_1<T<S_2$ are supersets of $S_1$, and hence none of the
   corresponding matrices $A_T$ can be
   positive definite.
\end{proof}

\section{Del Pezzo surfaces}

   Our aim is now to apply Algorithm~\ref{algo} to the Del Pezzo
   surfaces $X_r$ for $1\le r\le 8$, which are
   the blow-ups of $\P^2$ at
   $r$ general points.
   To this end, we first need to describe
   all negative curves on the surfaces
   $X_r$. They have been determined by Manin:

\begin{theorem}[Manin {\cite[Chapt.~IV]{Man}}]
   The negative curves on $X_r$ are
   \begin{itemize}
   \item[(1)]
      the exceptional divisors corresponding to the blown-up points $\liste
      p1r$
   \end{itemize}
   and the proper transforms of the following curves in $\P^2$:
   \begin{itemize}\compact
   \item[(2)]
      the lines through pairs of points $p_i, p_j$;
   \item[(3)]
      if $r\ge 5$, the conics through 5 points from $\liste p1r$;
   \item[(4)]
      if $r\ge 7$, the cubics through 7 points from $\liste p1r$ with a
      double point in one of them;
   \item[(5)]
      if $r=8$, the quartics through the 8 points $\liste p18$ with
      double points in 3 of them;
   \item[(6)]
      if $r=8$, the quintics through the 8 points $\liste p18$ with
      double points in 6 of them;
   \item[(7)]
      if $r=8$, the sextics through the 8 points $\liste p18$
      with double points in 7 of them and a triple point in one
      of them.
   \end{itemize}
\end{theorem}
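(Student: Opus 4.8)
The plan is to reduce the statement to a classification of the numerical classes of $(-1)$-curves on $X_r$ and then to enumerate those classes. First I would note that on a Del Pezzo surface every negative curve is a $(-1)$-curve: if $C$ is an irreducible curve with $C^2<0$, then adjunction gives $C^2+C\cdot K_{X_r}=2p_a(C)-2\ge -2$, while ampleness of $-K_{X_r}$ together with effectiveness of $C$ forces $-K_{X_r}\cdot C\ge 1$; combining the two yields $C^2\ge -1$, hence $C^2=-1$, $C\cdot K_{X_r}=-1$ and $p_a(C)=0$. So it suffices to show that \emph{every} class $C\in\NS(X_r)$ with $C^2=C\cdot K_{X_r}=-1$ is represented by a unique smooth rational curve, and then to list all such classes and match them with the curves in (1)--(7).

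For existence I would invoke Riemann--Roch on the surface: as $X_r$ is rational, $\chi(\O_{X_r})=1$, so for such a class $\chi(X_r,C)=1+\frac{1}{2}(C^2-C\cdot K_{X_r})=1$; moreover $(K_{X_r}-C)\cdot(-K_{X_r})=-(K_{X_r}^2+1)<0$ since $K_{X_r}^2=9-r>0$, so $K_{X_r}-C$ is not effective and $h^2(X_r,C)=0$ by Serre duality, whence $h^0(X_r,C)\ge 1$. For irreducibility I would use ampleness of $-K_{X_r}$ again: if a divisor $D\in|C|$ split as $D=\sum_j n_j\Gamma_j$ with distinct prime divisors $\Gamma_j$ and $n_j\ge 1$, then $1=-K_{X_r}\cdot C=\sum_j n_j(-K_{X_r}\cdot\Gamma_j)$ with every summand $\ge 1$, forcing $D$ to be a single reduced prime divisor; being irreducible with $p_a=0$ it is smooth rational. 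This is the only place generality of the points is used --- solely to guarantee that $X_r$ is a Del Pezzo surface, i.e.\ that $-K_{X_r}$ is ample; for a special configuration such as three collinear points the argument breaks down.

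It then remains to enumerate the classes. Writing $\NS(X_r)=\Z H\oplus\Z E_1\oplus\dots\oplus\Z E_r$ in the usual way, with $-K_{X_r}=3H-\sum_i E_i$, put $C=dH-\sum_i m_i E_i$. The classes with $d=0$ are precisely the $E_i$ of (1). For $d\ge 1$ the curve differs from every $E_i$, hence meets each $E_i$ non-negatively, so $m_i=C\cdot E_i\ge 0$, and the numerical conditions read
\[
   \sum_{i=1}^r m_i=3d-1,\qquad \sum_{i=1}^r m_i^2=d^2+1.
\]
By Cauchy--Schwarz, $(3d-1)^2\le r\sum_i m_i^2\le 8(d^2+1)$, which forces $d\le 7$, and $d=7$ would require all eight $m_i$ to equal $20/8$, which is impossible; hence $d\le 6$. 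For each $d\in\{1,\dots,6\}$ the nonnegative integer solutions are pinned down by the identity $\sum_i m_i(m_i-1)=\sum_i m_i^2-\sum_i m_i=(d-1)(d-2)$: the left-hand side receives $t(t-1)$ from an entry $m_i=t$, which vanishes for $t\le 1$ and is $\ge 2$ for $t\ge 2$, so it bounds the entries exceeding $1$; the number of entries equal to $1$ is then fixed by $\sum_i m_i=3d-1$, and requiring the total number of nonzero entries to be $\le r\le 8$ leaves, up to permutation, exactly one solution for each $d$ --- two $1$'s for $d=1$; five $1$'s for $d=2$; one $2$ and six $1$'s for $d=3$; three $2$'s and five $1$'s for $d=4$; six $2$'s and two $1$'s for $d=5$; one $3$ and seven $2$'s for $d=6$. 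Reading each pattern through the blow-down $X_r\to\P^2$ exhibits $C$ as the proper transform of a plane curve of degree $d$ with the stated multiplicities at the $p_i$, and the bounds $r\ge 5,7,8,8,8$ in (3)--(7) are exactly the demand that there be that many nonzero multiplicities. This completes the classification.

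The step I expect to be the crux is the claim that the numerically admissible classes are \emph{exactly} the ones carrying an irreducible curve --- i.e.\ the combination of the Riemann--Roch existence statement with the ampleness argument for irreducibility (on a general surface neither half holds). The positivity reduction $m_i\ge 0$ must be placed after existence, not before, to keep the argument non-circular; everything past that point is a bounded Diophantine search, and its translation into the geometric descriptions (1)--(7) is routine.
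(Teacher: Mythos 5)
Your argument is correct and follows the paper's overall strategy (reduce to the numerical classification of $(-1)$-classes, use ampleness of $-K_{X_r}$ to force irreducibility, then bound the degree via Cauchy--Schwarz and enumerate), but you establish \emph{existence} differently: where the paper produces effective divisors in each class $dH-\sum m_iE_i$ by an explicit dimension count on $\P^2$ (counting conditions imposed by the prescribed multiplicities), you instead run Riemann--Roch on $X_r$ together with Serre duality and ampleness of $-K_{X_r}$ to show $h^0(X_r,C)\ge 1$ directly for \emph{every} class with $C^2=C\cdot K_{X_r}=-1$. The Riemann--Roch route is slightly cleaner in that it treats all classes uniformly and makes transparent exactly what generality buys ($-K_{X_r}$ ample), whereas the paper's dimension count is more elementary and gives the plane-curve description for free. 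Your enumeration is also tidier: rather than the paper's bound stratified by $r$ ($d\le 2,3,7$ for $r\le6,7,8$) followed by a finite trial, you observe the identity $\sum_i m_i(m_i-1)=(d-1)(d-2)$, which immediately controls the multiplicities $\ge 2$ and pins down the unique pattern with at most $r\le 8$ nonzero entries for each $d$. Both paths lead to the same list; yours makes the Diophantine search essentially hand-checkable and makes the logical order (irreducibility before $m_i\ge 0$) explicit, which the paper leaves implicit.
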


   The proof in \cite{Man} works from the more general
   perspective of root systems. We believe that it can also be
   useful to have a very quick argument for this basic
   result in the spirit of \cite[Theorem~V.4.9]{Har77}, and we
   provide such an argument below. Since we will at any rate need to
   describe the classes of the
   negative curves and their intersection behaviour
   for our purposes, doing so means little additional effort.

\begin{proof}
   (i) We start by showing that negative curves as asserted in
   (2) to (7) exist. An immediate dimension count shows that on
   $\P^2$ there are in any event effective \emph{divisors}
   (which may be reducible) having \emph{at least} the indicated
   multiplicities.
   Writing $H=\pi^*\O_{\P^2}(1)$, $E_i=\pi\inverse(p_i)$, and $E=\longsum E1r$,
   these divisors on $\P^2$
   correspond to effective divisors in the
   following
   linear series on $X_r$:
   \begin{equation}\label{eqn-negative-curves}
      \renewcommand\arraystretch{1.4}
      \arraycolsep=0.139em
      \begin{array}{rcl@{\qquad}l}
      \CC1_{ij}&=&H-E_i-E_j              & 1\le i<j\le r \\
      \CC2 &=& 2H-E & (\mbox{if } r=5) \\
      \CC2_i &=& 2H-E+E_i & 1\le i\le 6 \quad (\mbox{if } r=6) \\
      \CC2_{ij} &=& 2H-E+E_i+E_j & 1\le j<j \le 7 \quad (\mbox{if } r=7) \\
      \CC2_{ijk}&=&2H-E+E_i+E_j+E_k     & 1\le i<j<k \le 8 \quad (\mbox{if } r=8) \\
      \CC3_i &=& 3H-E-E_i & 1\le i\le 7 \quad (\mbox{if } r=7) \\
      \CC3_{ij}&=&3H-E-E_i+E_j           & 1\le i, j\le 8,\ i\ne j \quad (\mbox{if } r=8) \\
      \CC4_{ijk}&=&4H-E-E_i-E_j-E_k     & 1\le i<j<k \le 8 \quad (\mbox{if } r=8) \\
      \CC5_{ij}&=&5H-2E+E_i+E_j          & 1\le i<j\le 8 \quad (\mbox{if } r=8)  \\
      \CC6_{i}&=&6H-2E-E_i               & 1\le i\le 8 \quad (\mbox{if } r=8)
      \end{array}
   \end{equation}
   The point is to show
   that these divisors are irreducible.
   To see this, one
   checks first that if $C$ is any of these divisors, then
   one
   has
   \begin{equation}\label{eqn-C}
      C^2=-1 \quad\mbox{and}\quad -K_{X_r}\cdot C=1 \ .
   \end{equation}
   As $-K_{X_r}$ is ample, the second equation implies then that
   $C$ must be irreducible. In particular, its image curve
   on $\P^2$
   has exactly the asserted multiplicities.

   (ii) It remains to show that the curves in (1) to (7) are the
   only negative curves on $X_r$. So suppose that $C\subset X_r$
   is any negative curve that is
   different from the exceptional curves of
   the blow-up.
   Via the adjunction formula it follows from the ampleness of
   $-K_{X_r}$ that the equations \eqnref{eqn-C} hold for $C$.
   One has
   $C\in|dH-\sum_{i=1}^r m_iE_i|$ for suitable integers $d\ge 1$ and
   $m_i\ge 0$.
   We claim that
   \begin{equation}\label{d-bounds}
      \begin{array}{l@{\qquad\mbox{if }}l}
      d\le 2 & r\le 6, \\
      d\le 3 & r= 7, \\
      d\le 6 & r= 8.
      \end{array}
   \end{equation}
   To prove \eqnref{d-bounds}, note first that the equations
   \eqnref{eqn-C} translate to
   \begin{equation}\label{eqn-translated}
      d^2-\sum m_i^2=-1 \quad\mbox{and}\quad 3d-\sum m_i=1 \ .
   \end{equation}
   Upon combining these equations with the
   Cauchy-Schwarz inequality
   $$
      \Big(\sum_{i=1}^r m_i\Big)^2\le r\sum_{i=1}^r m_i^2 \ ,
   $$
   we get a quadratic equation for $d$, which in turn implies
   $d\le 2$ for $r\le 6$, as well as $d\le 3$ for $r=7$
   and $d\le 7$ for $r=8$.
   So the claim \eqnref{d-bounds} will be established as soon
   as we can rule out the possibility that
   $d=7$ and $r=8$. In that case
   we would have equality in the Cauchy-Schwarz inequality, and
   therefore
   $m_1=\dots=m_8$. But then \eqnref{eqn-translated} would imply
   $m_i=5/2$, which is impossible.

   To complete the proof, one checks now that the equations
   \eqnref{eqn-translated} have only the solutions corresponding
   to
   the classes in \eqnref{eqn-negative-curves}.
   This can
   be
   done by trial, since
   the bounds \eqnref{d-bounds} on $d$
   leave only finitely many
   possibilities for the integers $m_i$.
\end{proof}

   One sees from \eqnref{eqn-negative-curves} that
   the number $N$ of negative curves on $X_r$ is given by the following table:
   \begin{equation}\label{eqn-number-of-curves}
      \begin{array}{c|*8c}
      r &  1 & 2 & 3 & 4 & 5 & 6 &  7&  8 \tabline
      N &  1 & 3 & 6 &10 &16 &27 &56 &240
      \end{array}
   \end{equation}

\begin{figure}
   \begin{center}
   \begin{scriptsize}
      \arraycolsep=0.08em
      $\left(\begin{array}{*{27}{r}}
         -1& 0& 1& 0& 1& 0& 0& 1& 0& 0& 0& 1& 0& 0& 0& 1& 0& 1& 0& 0& 0& 0& 0& 1& 1& 1& 1 \\
         0 &-1& 1& 0& 0& 1& 0& 0& 1& 0& 0& 0& 1& 0& 0& 1& 0& 0& 1& 0& 0& 0& 1& 0& 1& 1& 1 \\
         1 &1 &-1& 0& 0& 0& 0& 0& 0& 1& 0& 0& 0& 1& 1& 0& 0& 0& 0& 1& 1& 1& 1& 1& 0& 0& 0 \\
         0 &0 &0 &-1& 1& 1& 0& 0& 0& 1& 0& 0& 0& 1& 0& 1& 0& 0& 0& 1& 0& 0& 1& 1& 0& 1& 1 \\
         1 &0 &0 &1 &-1& 0& 0& 0& 1& 0& 0& 0& 1& 0& 1& 0& 0& 0& 1& 0& 1& 1& 1& 0& 1& 0& 0 \\
         0 &1 &0 &1 &0 &-1& 0& 1& 0& 0& 0& 1& 0& 0& 1& 0& 0& 1& 0& 0& 1& 1& 0& 1& 1& 0& 0 \\
         0 &0 &0 &0 &0 &0 &-1& 1& 1& 1& 0& 0& 0& 0& 1& 1& 0& 0& 0& 0& 1& 0& 1& 1& 1& 0& 1 \\
         1 &0 &0 &0 &0 &1 &1 &-1& 0& 0& 0& 0& 1& 1& 0& 0& 0& 0& 1& 1& 0& 1& 1& 0& 0& 1& 0 \\
         0 &1 &0 &0 &1 &0 &1 &0 &-1& 0& 0& 1& 0& 1& 0& 0& 0& 1& 0& 1& 0& 1& 0& 1& 0& 1& 0 \\
         0 &0 &1 &1 &0 &0 &1 &0 &0 &-1& 0& 1& 1& 0& 0& 0& 0& 1& 1& 0& 0& 1& 0& 0& 1& 1& 0 \\
         0 &0 &0 &0 &0 &0 &0 &0 &0 &0 &-1& 1& 1& 1& 1& 1& 0& 0& 0& 0& 0& 1& 1& 1& 1& 1& 0 \\
         1 &0 &0 &0 &0 &1 &0 &0 &1 &1 &1 &-1& 0& 0& 0& 0& 0& 0& 1& 1& 1& 0& 1& 0& 0& 0& 1 \\
         0 &1 &0 &0 &1 &0 &0 &1 &0 &1 &1 &0 &-1& 0& 0& 0& 0& 1& 0& 1& 1& 0& 0& 1& 0& 0& 1 \\
         0 &0 &1 &1 &0 &0 &0 &1 &1 &0 &1 &0 &0 &-1& 0& 0& 0& 1& 1& 0& 1& 0& 0& 0& 1& 0& 1 \\
         0 &0 &1 &0 &1 &1 &1 &0 &0 &0 &1 &0 &0 &0 &-1& 0& 0& 1& 1& 1& 0& 0& 0& 0& 0& 1& 1 \\
         1 &1 &0 &1 &0 &0 &1 &0 &0 &0 &1 &0 &0 &0 &0 &-1& 0& 1& 1& 1& 1& 1& 0& 0& 0& 0& 0 \\
         0 &0 &0 &0 &0 &0 &0 &0 &0 &0 &0 &0 &0 &0 &0 &0 &-1& 1& 1& 1& 1& 1& 1& 1& 1& 1& 1 \\
         1 &0 &0 &0 &0 &1 &0 &0 &1 &1 &0 &0 &1 &1 &1 &1 &1 &-1& 0& 0& 0& 0& 1& 0& 0& 0& 0 \\
         0 &1 &0 &0 &1 &0 &0 &1 &0 &1 &0 &1 &0 &1 &1 &1 &1 &0 &-1& 0& 0& 0& 0& 1& 0& 0& 0 \\
         0 &0 &1 &1 &0 &0 &0 &1 &1 &0 &0 &1 &1 &0 &1 &1 &1 &0 &0 &-1& 0& 0& 0& 0& 1& 0& 0 \\
         0 &0 &1 &0 &1 &1 &1 &0 &0 &0 &0 &1 &1 &1 &0 &1 &1 &0 &0 &0 &-1& 0& 0& 0& 0& 1& 0 \\
         0 &0 &1 &0 &1 &1 &0 &1 &1 &1 &1 &0 &0 &0 &0 &1 &1 &0 &0 &0 &0 &-1& 0& 0& 0& 0& 1 \\
         0 &1 &1 &1 &1 &0 &1 &1 &0 &0 &1 &1 &0 &0 &0 &0 &1 &1 &0 &0 &0 &0 &-1& 0& 0& 0& 0 \\
         1 &0 &1 &1 &0 &1 &1 &0 &1 &0 &1 &0 &1 &0 &0 &0 &1 &0 &1 &0 &0 &0 &0 &-1& 0& 0& 0 \\
         1 &1 &0 &0 &1 &1 &1 &0 &0 &1 &1 &0 &0 &1 &0 &0 &1 &0 &0 &1 &0 &0 &0 &0 &-1& 0& 0 \\
         1 &1 &0 &1 &0 &0 &0 &1 &1 &1 &1 &0 &0 &0 &1 &0 &1 &0 &0 &0 &1 &0 &0 &0 &0 &-1& 0 \\
         1 &1 &0 &1 &0 &0 &1 &0 &0 &0 &0 &1 &1 &1 &1 &0 &1 &0 &0 &0 &0 &1 &0 &0 &0 &0 &-1
      \end{array}\right)$

   \end{scriptsize}
   \caption{\label{figure-27-lines}
      The intersection matrix $A_{6}$ of the 27 lines on a smooth
      cubic surface, obtained as a submatrix of $A_8$
      as described in
      Sect.~\ref{sect-proof}.}
   \end{center}
\end{figure}

\section{Proof of the theorem}\label{sect-proof}

   We now turn to the proof of the theorem stated in the
   introduction. We start by determining the intersection
   products of the negative curves on $X_r$.
   Note that
   it is enough to write down the intersection matrix $A_8$
   of the negative curves on the surface $X_8$:
   The intersection matrices $A_r$ for the surfaces $X_r$, $r<8$,
   can then be obtained by taking the principal submatrices
   corresponding to those curves whose classes are contained in
   $\Z\cdot[H]\oplus\bigoplus_{i=1}^r[E_i]$.

   In order to get a
   compact statement that is suitable for computations, we will
   use for tuples of integers $(\liste i1m)$ and $(\liste j1n)$
   the abbreviation
   $$
      (\liste i1m)*(\liste j1n)
         =\sum_{{\mu=1,\dots,m} \atop
         {\nu=1,\dots,n}}\mbox{sign}(i_\mu)\cdot\mbox{sign}(i_\nu)\cdot\dd{\left|i_\mu\right|}{\left|j_\nu\right|}
         \ ,
   $$
   where $\delta$ is the Kronecker delta.
   Keeping the notation for the curves on $X_8$
   and the index ranges as in~\eqnref{eqn-negative-curves},
   we find:

   $$ \renewcommand\arraystretch{1.3}
      \begin{array}{rcl@{\quad}rcl}
         E_i\cdot        E_\ell             &=& (-i)*(\ell)            &            \CC2_{ijk}\cdot \CC3_{\ell m}      &=& 1 + (i,j,k)*(\ell,-m)  \\
         E_i\cdot        \CC1_{\ell m}      &=& (i)*(\ell,m)           &            \CC2_{ijk}\cdot \CC4_{\ell m n}    &=& (i,j,k)*(\ell, m, n) \\
         E_i\cdot        \CC2_{\ell m n}    &=& 1 - (i)*(\ell,m,n)     &            \CC2_{ijk}\cdot \CC5_{\ell m}      &=& 2 - (i,j,k)*(\ell,m) \\
         E_i\cdot        \CC3_{\ell m}      &=& 1 + (i)*(\ell,-m)      &            \CC2_{ijk}\cdot \CC6_{\ell}        &=& 1 + (i,j,k)*(l) \\
         E_i\cdot        \CC4_{\ell m n}    &=& 1 + (i)*(\ell,m,n)     &            \CC3_{ij}\cdot  \CC3_{\ell m}      &=& 1 + (-i,j)*(\ell,-m) \\
         E_i\cdot        \CC5_{\ell m}      &=& 2 - (i)*(\ell,m)       &            \CC3_{ij}\cdot  \CC4_{\ell m n}    &=& 1 + (-i,j)*(\ell,m,n) \\
         E_i\cdot        \CC6_{\ell}        &=& 2 + (i)*(\ell)         &            \CC3_{ij}\cdot  \CC5_{\ell m}      &=& 1 + (i,-j)*(\ell,m) \\
         \CC1_{ij}\cdot  \CC1_{\ell m}      &=& 1 - (i,j)*(\ell,m)     &            \CC3_{ij}\cdot  \CC6_{\ell}        &=& 1 + (-i,j)*(\ell) \\
         \CC1_{ij}\cdot  \CC2_{\ell m n}    &=& (i,j)*(\ell,m,n)       &            \CC4_{ijk}\cdot \CC4_{\ell m n}    &=& 2 - (i,j,k)*(\ell,m,n) \\
         \CC1_{ij}\cdot  \CC3_{\ell m}      &=& 1 + (i,j)*(-\ell,m)    &            \CC4_{ijk}\cdot \CC5_{\ell m}      &=& (i,j,k)*(\ell,m) \\
         \CC1_{ij}\cdot  \CC4_{\ell m n}    &=& 2 - (i,j)*(\ell,m,n)   &            \CC4_{ijk}\cdot \CC6_{\ell}        &=& 1 - (i,j,k)*(\ell) \\
         \CC1_{ij}\cdot  \CC5_{\ell m}      &=& 1 + (i,j)*(\ell,m)     &            \CC5_{ij}\cdot  \CC5_{\ell m}      &=& 1 - (i,j)*(\ell,m) \\
         \CC1_{ij}\cdot  \CC6_{\ell}        &=& 2 - (i,j)*(\ell)       &            \CC5_{ij}\cdot  \CC6_{\ell}        &=& (i,j)*(\ell) \\
         \CC2_{ijk}\cdot \CC2_{\ell m n}    &=& 2 - (i,j,k)*(\ell,m,n) &            \CC6_i\cdot     \CC6_{\ell}        &=& (-i)*(\ell)
      \end{array}
   $$
   The preceding formulas determine
   the intersection matrix $A_8$,
   which is of dimension 240.
   As described above, the matrices $A_r$
   for $r=1,\dots,7$
   are obtained as submatrices thereof.
   They are
   of dimension
   $1$,
   $3$,
   $6$,
   $10$,
   $16$,
   $27$, and
   $56$
   respectively (see \eqnref{eqn-number-of-curves}).
   As an example, we display the matrix $A_6$ in
   Figure~\ref{figure-27-lines}.
   Using Algorithm~\ref{algo}, applied to the matrix $-A_r$, we
   obtain the
   number of negative definite principal submatrices of $A_r$:
   $$
      \begin{array}{c|*8c}
         r        & 1 & 2 & 3 & 4 & 5 & 6 & 7 & 8 \tabline
         \#       & 1 & 4 & 17 & 75 & 392 & 2\,763 & 33\,644 & 1\,501\,680
      \end{array}
   $$
   Proposition~\ref{prop-number-of-chambers} then gives
   part (i) of the theorem. With an obvious
   modification of Algorithm~\ref{algo} we obtain in each
   case also the
   maximal cardinality of the positive definite index sets, which
   shows that for each $r$ there are positive definite
   principal submatrices of $-A_r$ of dimension $r$. This proves
   part (ii) of the theorem.

\bigskip
\bigskip
\bigskip
\bigskip

\small
   Tho\-mas Bau\-er,
   Fach\-be\-reich Ma\-the\-ma\-tik und In\-for\-ma\-tik,
   Philipps-Uni\-ver\-si\-t\"at Mar\-burg,
   Hans-Meer\-wein-Stra{\ss}e,
   D-35032~Mar\-burg, Germany.

\nopagebreak
   \textit{E-mail address:} \texttt{tbauer@mathematik.uni-marburg.de}

\bigskip
   Michael Funke,
   Fach\-be\-reich Ma\-the\-ma\-tik und In\-for\-ma\-tik,
   Philipps-Uni\-ver\-si\-t\"at Mar\-burg,
   Hans-Meer\-wein-Stra{\ss}e,
   D-35032~Mar\-burg, Germany.

\nopagebreak
   \textit{E-mail address:} \texttt{funke@mathematik.uni-marburg.de}

\bigskip
   Sebastian Neumann,
   Albert-Ludwigs-Universit\"at Freiburg,
   Mathematisches Institut,
   Eckerstraße 1,
   D-79104 Freiburg, Germany

\nopagebreak
   \textit{E-mail address:} \texttt{sebastian.neumann@math.uni-freiburg.de}

\end{document}